\newtheorem{theorem}{Theorem}[section]
\newtheorem{lemma}[theorem]{Lemma}
\theoremstyle{definition}
\newtheorem{definition}[theorem]{Definition}
\newtheorem{example}[theorem]{Example}
\newtheorem{obs}[theorem]{Observation}
\numberwithin{equation}{section}
\DeclareMathOperator{\n}{N }
\DeclareMathOperator{\von}{von}
\newcommand{\Mod}[1]{ \left(\mathrm{mod}\ #1\right)}
\newcommand{\F}{\mathbb{F}}
\newcommand{\cupdot}{\mathbin{\mathaccent\cdot\cup}}
\begin{document}
\title{A New Formula for the Minimum Distance of an Expander Code}

\author{Sudipta Mallik} 
\affil{\small Department of Mathematics and Statistics, Northern Arizona University, 801 S. Osborne Dr.\\ PO Box: 5717, Flagstaff, AZ 86011, USA 

sudipta.mallik@nau.edu}

\maketitle
\begin{abstract}
An expander code is a binary linear code whose parity-check matrix is the bi-adjacency matrix of a bipartite expander graph. We provide a new formula for the minimum distance of such codes. We also provide a new proof of the result that  $2(1-\varepsilon) \gamma n$ is a lower bound of the minimum distance of the expander code given by a $(m,n,d,\gamma,1-\varepsilon)$ expander bipartite graph.
\end{abstract}


\section{Introduction}

Binary linear codes can be constructed from graphs. One such construction was given from bipartite graphs by Tanner in \cite{Tanner}. Sipser and Spielman constructed expander codes from bipartite expander graphs in \cite{Spielman}. One of the goals of all these constructions was to have linear codes with relatively large minimum distance for efficient error correction. For more details on the literature of linear codes and bipartite graphs, see \cite{alon,Capalbo,Spielman,Zemor}. In this article we provide a new formula for the minimum distance of expander codes. We also provide a new proof of the result that  $2(1-\varepsilon) \gamma n$ is a lower bound of the minimum distance of the expander code given by a $(m,n,d,\gamma,1-\varepsilon)$ expander bipartite graph.

Now we present a brief introduction to coding theory:  A binary linear code $C$ of length $n$ and dimension $k$ is a $k$ dimensional subspace of $\F_2^n$ where $\F_2$ is the binary field. The code $C$ is called an $[n,k]$-code. The support of a codeword $\bm x\in C$ is the set of indices $i$ such that $i$th entry of $\bm x$ is $1$. The {\it Hamming weight} $w_H(\bm x)$ of a vector $\bm x \in \F_2^n$ is the size of the support of $\bm x$. The {\it  Hamming distance}, denoted by $d_H(\bm x,\bm y)$, between two codewords $\bm x$ and $\bm y$ in $C$ is $d_H(\bm x,\bm y) = w_H(\bm x-\bm y)$. The {\it minimum distance} of $C$, denoted by $d(C)$, is the minimum distance between distinct codewords in $C$. Note that $d(C)$ is the minimum Hamming weight of a nonzero codeword in $C$.  We call $C$ to be an $[n,k,d]$ code when $d(C)=d$. A binary matrix $H$ is called the {\it parity-check matrix} of $C$ if $C$ the null space of $H$, i.e., 
$$C = \{\bm c \in \F_2^n | H\bm c^{T} = 0\}.$$

The minimum distance $d(C)$ can be expressed as the minimum number of linear dependent columns of the parity-check matrix of $C$ as follows:

\begin{theorem}\cite[Theorem 2.2]{roth}\label{minmallyDependentColumns}
Let $C$ be a linear code and $H$ its  parity-check matrix. Then $C$ has minimum distance $d$ if and only if any $d-1$ columns of $H$ are linearly independent and some $d$ columns of $H$ are linearly dependent.
\end{theorem}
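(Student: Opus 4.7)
The plan is to translate the defining condition $H\bm c^T = \bm 0$ into a statement about columns of $H$, and then read off the conclusion from the definition of minimum distance. Writing $\bm h_1,\ldots,\bm h_n$ for the columns of $H$, for any $\bm c = (c_1,\ldots,c_n)\in \F_2^n$ one has
$$H\bm c^T \;=\; \sum_{i=1}^n c_i\, \bm h_i.$$
Hence $\bm c\in C$ if and only if $\sum_{i\in \mathrm{supp}(\bm c)} \bm h_i = \bm 0$, which over $\F_2$ is exactly the statement that the columns indexed by $\mathrm{supp}(\bm c)$ are linearly dependent.

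First I would use this correspondence to show that the minimum Hamming weight of a nonzero codeword equals the smallest cardinality of a nonempty linearly dependent set of columns of $H$. In one direction, a nonzero $\bm c \in C$ of weight $w$ exhibits $w$ columns that sum to $\bm 0$, hence are linearly dependent. In the other direction, any collection of $k$ columns $\bm h_{i_1},\ldots,\bm h_{i_k}$ summing to $\bm 0$ gives a nonzero codeword of weight $k$, namely the indicator vector of $\{i_1,\ldots,i_k\}$.

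I would then finish by unpacking what $d(C) = d$ means. It asserts both that (i) no nonzero codeword has weight less than $d$, which by the correspondence means that no set of $d-1$ or fewer columns of $H$ is linearly dependent, i.e.\ any $d-1$ columns of $H$ are linearly independent; and (ii) some nonzero codeword has weight exactly $d$, which means that some set of $d$ columns of $H$ is linearly dependent. The only subtle point is confirming that, over $\F_2$, ``linearly dependent'' coincides with ``some nonempty subset of columns sums to zero''; this holds because every coefficient lies in $\{0,1\}$, so no further bookkeeping of coefficients is needed, and I do not expect a significant obstacle here.
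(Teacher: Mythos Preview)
Your argument is correct and is the standard textbook proof. Note, however, that the paper does not give its own proof of this theorem: it is stated as a citation of \cite[Theorem~2.2]{roth} and used as a black box, so there is no in-paper proof to compare against.
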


For a vertex $v$ of a graph $G$, the set of all vertices in $G$ adjacent to $v$ is called the {\it neighbor} of $v$, denoted by $\n(v)$. For a set $S$ of vertices of $G$, $\n(S)$ denotes the union of neighbors of vertices in $S$. Now we define a bipartite expander graph based on its definition in \cite{Spielman,Tanner} with the roles of left and right set of vertices switched:
\begin{definition}
Suppose $G$ is a bipartite graph with vertex set $L\cupdot R$ such that $|L|=m$, $|R|=n$, each edge of $G$ joins a vertex of $L$ with a vertex of $R$, and  each vertex of $R$ is adjacent to exactly $d$ vertices of $L$. For positive $\gamma$ and $\alpha$, $G$ is called a $(m,n,d,\gamma,\alpha)$ {\it expander graph} if for each set $S\subseteq R$ satisfying $|S|\leq \gamma n$, we have $$|N(S)|\geq d\alpha |S|.$$
\end{definition}

\begin{figure}
\centering
\begin{tikzpicture}[scale=0.6, transform shape, thick,
  every node/.style={draw,circle},
  fsnode/.style={fill=blue},
  ssnode/.style={fill=teal},
  every fit/.style={ellipse,draw,inner sep=-2pt,text width=2cm},
  -,shorten >= 0pt,shorten <= 0pt
]

\begin{scope}[start chain=going below,node distance=7mm]
\foreach \i in {1,...,5}
  \node[fsnode,on chain] (f\i) [label=left: \i] {};
\end{scope}

\begin{scope}[xshift=7cm,yshift=-1.1cm,start chain=going below,node distance=7mm]
\foreach \i in {6,...,9}
  \node[ssnode,on chain] (s\i) [label=right: \i] {};
\end{scope}

\node [blue,fit=(f1) (f5),label=below: $L$] {};
\node [teal,fit=(s6) (s9),label=below: $R$] {};

\draw (s6) -- (f1);
\draw (s6) -- (f2);

\draw (s7) -- (f4);
\draw (s7) -- (f5);

\draw (s8) -- (f2);
\draw (s8) -- (f3);

\draw (s9) -- (f1);
\draw (s9) -- (f3);
\end{tikzpicture}

\caption{A $(5,4,2,\frac{1}{2},\frac{2}{3})$ expander graph.}\label{fig1}
\end{figure}
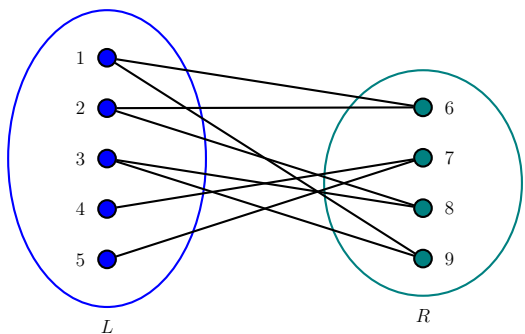

\begin{example}
The bipartite graph in Figure \ref{fig1} is a $(5,4,2,\frac{1}{2},\frac{2}{3})$ expander graph. Each vertex in $R$ has degree $d=2$.  If $S\subseteq R$ satisfies $|S|\leq \gamma n=2$, then $|S|=1$ or $2$. For $|S|=1$,  $|N(S)|=2\geq \frac{4}{3} =d\alpha |S|$. Also for $|S|=2$,  $|N(S)|\geq \frac{8}{3}=d\alpha |S|$.
\end{example}

\begin{definition}
Suppose $G$ is a $(m,n,d,\gamma,\alpha)$ expander graph and $B$ is the $m\times n$ bi-adjacency matrix of $G$, i.e., 
$$A=\left[ \begin{array}{c|c}
O_m&B\\
\hline
B^T& O_n
\end{array}\right]$$
is the adjacency matrix of $G$. The binary linear code whose parity-check matrix is $B$ is called the {\it expander code} of $G$, denoted by $C(G)$. In other words,
$$C(G) = \{\bm c \in \F_2^n \;|\; B\bm c^{T} = \bm 0 \text{ in } \F_2\}.$$
\end{definition}

\begin{example}
The bi-adjacency matrix of the $(5,4,2,\frac{1}{2},\frac{2}{3})$ expander graph $G$ in Figure \ref{fig1} is given by
$$B=\left[\begin{array}{cccc}
     1  &   0   &   0   &   1  \\
     1  &   0   &   1   &   0  \\
     0  &   0   &   1   &   1  \\
     0  &   1   &   0   &   0  \\
     0  &   1   &   0   &   0  
\end{array}\right].$$

The expander code $C(G)$ of $G$ is given by
$$C(G) = \{\bm c \in \F_2^4 \;|\; B\bm c^{T} = \bm 0 \text{ in } \F_2\}.$$
\end{example}

\section{Main Results}
We start with the following notation and definition of $von$ from \cite{Mallik}.

\begin{definition}
For a nonempty subset $S$ of vertices of a graph $G$, the set of vertices of $G$ with odd number of neighbors in $S$ is denoted by $\von(S)$, i.e.,
$$\von(S)=\{v\in V(G)\;:\: |\n(v)\cap S| \text{ is odd}\}.$$
\end{definition}

\begin{example}
Consider the $(5,4,2,\frac{1}{2},\frac{2}{3})$ expander graph $G$ in Figure \ref{fig1}. For $v=6,7,8,9$, $\von(\{v\})=\n(v)$. For $S=\{6,7,8\}$, $\von(S)= \{1,3,4,5\}$. For $S=\{6,8,9\}$, $\von(S)= \varnothing$.
\end{example}

Now we proceed to the main results of this article which give a new formula of the minimum distance of an expander code.

\begin{theorem}\label{mindist1}
Suppose $G$ is a bipartite graph with vertex set $L \cupdot R$ such that $|L|=m$, $|R|=n$, and $B$ is the $m\times n$ bi-adjacency matrix of $G$. Let $S$ be a nonempty subset of $R$. If $\von(S)=\varnothing$, then the columns of $B$ indexed by $S$ are linearly dependent. Conversely if the columns of $B$ indexed by $S$ are minimally linearly dependent, then $\von(S)=\varnothing$.
\end{theorem}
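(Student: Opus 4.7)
The plan is to observe that, because we are working over $\F_2$, summing columns of $B$ has a very clean combinatorial interpretation in terms of $\von$, after which both directions follow almost immediately.

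First I would set up the key identity. Let $\bm b_j$ denote the $j$-th column of $B$, so that $(\bm b_j)_i = 1$ iff vertex $i \in L$ is adjacent to vertex $j \in R$. For a nonempty $S \subseteq R$, the $i$-th coordinate of the sum $\sum_{j\in S}\bm b_j$ taken over $\F_2$ equals $|\n(i)\cap S| \pmod 2$. Hence $\sum_{j\in S}\bm b_j$ is precisely the indicator vector of $\von(S)$, so
\[
\sum_{j\in S}\bm b_j=\bm 0 \quad\Longleftrightarrow\quad \von(S)=\varnothing.
\]

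For the first implication, if $\von(S)=\varnothing$ then the identity above gives $\sum_{j\in S}\bm b_j=\bm 0$, which is a nontrivial $\F_2$-linear combination of the columns indexed by $S$ (all coefficients equal to $1$, and $S$ is nonempty). Thus those columns are linearly dependent.

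For the converse, assume the columns indexed by $S$ are minimally linearly dependent. By definition there exist coefficients $c_j\in\F_2$, not all zero, with $\sum_{j\in S}c_j\bm b_j=\bm 0$. Let $T=\{j\in S : c_j=1\}$; then $T$ is nonempty and $\sum_{j\in T}\bm b_j=\bm 0$, so the columns indexed by $T$ are linearly dependent. Minimality of the dependence forces $T=S$, i.e.\ $c_j=1$ for every $j\in S$. Therefore $\sum_{j\in S}\bm b_j=\bm 0$, and by the key identity $\von(S)=\varnothing$.

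There is no real obstacle here: the only subtlety is recognizing that over $\F_2$ every nontrivial dependence is just the indicator function of its support, so ``minimally dependent'' pins down the combination $\sum_{j\in S}\bm b_j$ uniquely, which is exactly what lets us read off $\von(S)=\varnothing$.
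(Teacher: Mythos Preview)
Your proof is correct and follows essentially the same approach as the paper: both directions rest on the identity that $\sum_{j\in S}\bm b_j$ over $\F_2$ is the indicator vector of $\von(S)$. Your converse is in fact slightly more careful than the paper's, since you explicitly invoke minimality to conclude that the dependence must use every column with coefficient $1$, whereas the paper simply asserts $B_1+\cdots+B_k\equiv 0\Mod{2}$ without spelling out that step.
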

\begin{proof}
Suppose $\von(S)=\varnothing$ where $S=\{i_1,i_2,\ldots,i_t\}\subseteq R$. Then 
$$B_{i_1}+B_{i_2}+\cdots+B_{i_t} \equiv 0 \Mod{2}$$
which implies columns $B_{i_1},B_{i_2},\ldots,B_{i_t}$ of $B$ are linearly dependent.

Conversely suppose $S=\{1,2,\ldots,k\}\subseteq R$ and $B_1,B_2,\ldots,B_k$ are minimally linearly dependent columns of $B$. Then  $B_1+B_2+\cdots+B_k \equiv 0 \Mod{2}$. We claim $\von(S)=\varnothing$. Otherwise let $i\in \von(S)$. Then 
$$(B_1+B_2+\cdots+B_k)_i \equiv 1 \Mod{2},$$
a contradiction to $B_1+B_2+\cdots+B_k \equiv 0 \Mod{2}$. Thus $\von(S)=\varnothing$.
\end{proof}

\begin{theorem}\label{mindist}
Suppose $G$ is a bipartite graph with vertex set $L \cupdot R$ such that $|L|=m$, $|R|=n$, and $B$ is the $m\times n$ bi-adjacency matrix of $G$. Suppose $C$ is the binary linear code whose parity-check matrix is $B$. Then the minimum distance $d(C)$ of $C$ is given by
$$d(C) = \min \{ |S|\;:\; \varnothing \neq S\subseteq R, \; \von(S)=\varnothing \}.$$
\end{theorem}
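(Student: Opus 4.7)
The plan is to combine Theorem \ref{minmallyDependentColumns} (which identifies $d(C)$ with the minimum size of a linearly dependent set of columns of $B$) with both directions of Theorem \ref{mindist1} (which translate between ``$\von(S)=\varnothing$'' and linear dependence of the columns indexed by $S$). Denote $\mu := \min\{|S| : \varnothing \neq S \subseteq R,\ \von(S)=\varnothing\}$; I will prove $d(C)\le \mu$ and $\mu \le d(C)$.

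For the inequality $d(C)\le \mu$, I would take a set $S\subseteq R$ of size $\mu$ achieving the minimum. By the first part of Theorem \ref{mindist1}, the columns of $B$ indexed by $S$ are linearly dependent, so Theorem \ref{minmallyDependentColumns} gives $d(C)\le |S|=\mu$.

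For the reverse inequality $\mu\le d(C)$, let $d:=d(C)$. By Theorem \ref{minmallyDependentColumns} there is a set $T\subseteq R$ with $|T|=d$ such that the columns of $B$ indexed by $T$ are linearly dependent, while any $d-1$ columns of $B$ are linearly independent. Consequently the columns indexed by $T$ are \emph{minimally} linearly dependent: no proper subset of them can be linearly dependent, because any such subset has size at most $d-1$. The converse part of Theorem \ref{mindist1} then yields $\von(T)=\varnothing$, and therefore $\mu\le|T|=d(C)$, which combined with the previous paragraph proves the equality.

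The argument is essentially a bookkeeping combination of the two prior theorems, so I do not anticipate a substantive obstacle; the only point requiring care is to verify that the witnessing set $T$ for $d(C)$ is \emph{minimally} linearly dependent (not merely linearly dependent), which is needed to invoke the converse direction of Theorem \ref{mindist1}. This follows at once from the definition of $d(C)$ via Theorem \ref{minmallyDependentColumns}, since no smaller subset of columns can be dependent.
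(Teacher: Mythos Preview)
Your proof is correct and follows essentially the same approach as the paper: both establish the two inequalities by invoking Theorem~\ref{minmallyDependentColumns} and the two directions of Theorem~\ref{mindist1}, with the key observation that a witnessing set of size $d(C)$ consists of \emph{minimally} dependent columns. Your write-up is arguably a bit cleaner in making the minimality of $T$ explicit before applying the converse part of Theorem~\ref{mindist1}.
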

\begin{proof}
First note that $B$ is the parity-check matrix of $C$. By Theorem \ref{minmallyDependentColumns}, the support of a code word in $C$ with weight $d(C)$ is the set of indices of some minimally dependent columns of $B$, say indexed by $T$ for some nonempty subset $T$ of $R$. By Theorem \ref{mindist1},  $\von(T)=\varnothing$. Then 
$$d(C)=|T| \geq  \min \{ |S|\;:\; \varnothing \neq S\subseteq R, \; \von(S)=\varnothing \}.$$
To show the equality, on the contrary suppose there is a nonempty subset $S$ of $R$ for which $d(C)> |S|$ and $\von(S)=\varnothing$. Then by Theorem \ref{mindist1}, we find $|S|$ linearly dependent columns of $B$ giving a codeword of $C$ with weight less than $d(C)$, a contradiction.
\end{proof}

\begin{example}
Consider the $(5,4,2,\frac{1}{2},\frac{2}{3})$ expander graph $G$ in Figure \ref{fig1}.  Suppose $C$ is the binary linear code whose parity-check matrix is the bi-adjacency matrix of $G$. We can verify that for any nonempty set $S\subseteq R$ with $|S|\leq 2$, we have $\von(S)\neq \varnothing$. Now for $S=\{6,8,9\}$, $\von(S)= \varnothing$. Thus by Theorem \ref{mindist}, 
$$d(C) = \min \{ |S|\;:\; \varnothing \neq S\subseteq R, \; \von(S)=\varnothing \}=|\{6,8,9\}|=3.$$
\end{example}

The preceding theorem results in a new formula for the minimum distance of expander codes.
\begin{theorem}\label{mindist2}
Suppose $G$ is a $(m,n,d,\gamma,\alpha)$ expander graph with vertex set $L \cupdot R$ such that $|L|=m$ and $|R|=n$. Then the minimum distance $d(C)$ of the expander code $C$ of $G$ is given by
$$d(C) = \min \{ |S|\;:\; \varnothing \neq S\subseteq R, \; \von(S)=\varnothing \}.$$
\end{theorem}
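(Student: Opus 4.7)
The statement is an immediate specialization of Theorem \ref{mindist} to the expander-graph setting, so my plan is to observe that no genuine new work is required: a $(m,n,d,\gamma,\alpha)$ expander graph $G$ is in particular a bipartite graph with vertex set $L \cupdot R$, $|L|=m$, $|R|=n$, and the expander code $C(G)$ is by definition the binary linear code whose parity-check matrix is the $m \times n$ bi-adjacency matrix $B$ of $G$. Thus the hypotheses of Theorem \ref{mindist} are satisfied verbatim.

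The plan therefore is to write a one-line derivation: invoke Theorem \ref{mindist} with this $G$ and this $B$, which directly yields
$$d(C) = \min \{ |S|\;:\; \varnothing \neq S\subseteq R, \; \von(S)=\varnothing \}.$$
The expander conditions involving $d$, $\gamma$, and $\alpha$ play no role in this identity; they will only become relevant when one wants to bound $d(C)$ from below (as in the $2(1-\varepsilon)\gamma n$ result mentioned in the abstract).

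Since there is no real obstacle, I would not pad the argument with auxiliary lemmas. The only thing to be careful about is to make clear that the ``only if'' direction of Theorem \ref{mindist1} used inside the proof of Theorem \ref{mindist} applies to \emph{minimally} dependent sets of columns, and that the minimum in the displayed formula is taken over all nonempty $S \subseteq R$ with $\von(S) = \varnothing$, not only minimally dependent ones. This is already handled inside Theorem \ref{mindist}, so it requires no further comment here.
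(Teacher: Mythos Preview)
Your proposal is correct and matches the paper's approach exactly: the paper states Theorem \ref{mindist2} as an immediate consequence of Theorem \ref{mindist} without giving any separate proof, since an expander graph is in particular a bipartite graph and the expander code is by definition the code with parity-check matrix $B$. Your observation that the parameters $d,\gamma,\alpha$ play no role here is precisely the point.
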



Using the minimum distance formula given in Theorem \ref{mindist2}, we provide a new proof of the following known result which gives a lower bound of the minimum distance of an expander code.
\begin{theorem}\label{lower bound}
Let $0<\varepsilon<\frac{1}{2}$ and $\gamma >0$ such that $\gamma n$ is a positive integer.  Suppose $G$ is a $(m,n,d,\gamma,1-\varepsilon)$ expander graph with vertex set $L \cupdot R$ such that $|L|=m$ and $|R|=n$. Then the minimum distance $d(C)$ of the expander code $C$ of $G$ has the following lower bound:
$$d(C) \geq 2(1-\varepsilon) \gamma n.$$
\end{theorem}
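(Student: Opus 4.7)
The plan is to apply Theorem \ref{mindist2} and reduce the bound to a combinatorial statement about neighborhoods: it suffices to show that every nonempty $S\subseteq R$ with $\von(S)=\varnothing$ has $|S|\geq 2(1-\varepsilon)\gamma n$. The key observation linking $\von$ to the expansion condition is this: if $\von(S)=\varnothing$ then every $v\in N(S)$ has an even positive number of neighbors in $S$, hence at least $2$. Double-counting the edges between $S$ and $N(S)$ then gives the upper bound
\[
d|S|\;=\;\sum_{v\in N(S)}|N(v)\cap S|\;\geq\;2|N(S)|,
\]
so $|N(S)|\leq d|S|/2$. This will be the central inequality.

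I would then split into two cases based on the size of $S$. If $|S|\leq \gamma n$, the expansion hypothesis applied directly to $S$ yields $|N(S)|\geq d(1-\varepsilon)|S|$; combining with $|N(S)|\leq d|S|/2$ forces $1-\varepsilon\leq 1/2$, contradicting $\varepsilon<1/2$. Hence no such $S$ exists in this range, and in particular this already shows that the minimizing $S$ in Theorem \ref{mindist2} must satisfy $|S|>\gamma n$.

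For the remaining case $|S|>\gamma n$, I would choose any subset $S'\subseteq S$ with $|S'|=\gamma n$ (possible because $\gamma n$ is a positive integer). The expansion hypothesis applied to $S'$ gives $|N(S')|\geq d(1-\varepsilon)\gamma n$, and since $N(S')\subseteq N(S)$ we get $|N(S)|\geq d(1-\varepsilon)\gamma n$. Combining again with the edge-counting bound $|N(S)|\leq d|S|/2$ yields
\[
\tfrac{d|S|}{2}\;\geq\;d(1-\varepsilon)\gamma n,
\]
which rearranges to $|S|\geq 2(1-\varepsilon)\gamma n$, completing the argument.

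There is no serious obstacle; the only subtle point is recognizing that one cannot apply the expansion hypothesis to $S$ itself when $|S|>\gamma n$, and that passing to a subset $S'$ of size exactly $\gamma n$ is the right trick, exploiting the monotonicity $N(S')\subseteq N(S)$. The structural insight that $\von(S)=\varnothing$ imposes the "at least two neighbors in $S$" condition on $N(S)$ is what turns the expansion inequality (a lower bound on $|N(S)|$) into a lower bound on $|S|$ itself.
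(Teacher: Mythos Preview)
Your argument is correct. Both your proof and the paper's start from Theorem~\ref{mindist2}, split into the cases $|S|\le\gamma n$ and $|S|>\gamma n$, and in the second case pick a subset of size exactly $\gamma n$; the treatment of Case~1 is essentially identical (your inequality $|N(S)|\le d|S|/2$ is precisely the special case $\von(S)=\varnothing$ of the paper's Lemma~\ref{lemma1}). The genuine divergence is in Case~2. You use the trivial monotonicity $N(S')\subseteq N(S)$ to transport the expansion lower bound from $S'$ to $S$ and then feed it directly into $|N(S)|\le d|S|/2$. The paper instead invokes its Lemma~\ref{lemma2}, which says that $\von(S)=\varnothing$ forces $\von(T)=\von(S\setminus T)$, and derives a contradiction by bounding $|\von(T)|$ from below (via Lemma~\ref{lemma1}) and $|\von(S\setminus T)|$ from above (via $|S\setminus T|<(1-2\varepsilon)\gamma n$). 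Your route is shorter and more elementary, requiring neither auxiliary lemma explicitly; the paper's route, on the other hand, is designed to showcase the $\von$ formalism and is what makes the proof ``new'' relative to the classical argument you have essentially rediscovered.
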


To prove Theorem \ref{lower bound}, we first prove the following lemmas:

\begin{lemma}\label{lemma1}
Let $0<\varepsilon<\frac{1}{2}$. Suppose $G$ is a $(m,n,d,\gamma,1-\varepsilon)$ expander graph with vertex set $L \cupdot R$ such that $|L|=m$ and $|R|=n$.  For each set $S\subseteq R$ satisfying $|S|\leq \gamma n$, we have 
$$d(1-2\varepsilon)|S| \leq |\von(S)| \leq |\n(S)|.$$

\end{lemma}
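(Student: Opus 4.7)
The upper bound is immediate: any vertex $v \in \von(S)$ has at least one (in fact, an odd number of) neighbor in $S$, so $\von(S) \subseteq \n(S)$ and hence $|\von(S)| \leq |\n(S)|$. So the real content is the lower bound $d(1-2\varepsilon)|S| \leq |\von(S)|$, and the plan is a double-counting argument on the edges between $S$ and $\n(S)$, combined with the expansion hypothesis.

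First I would count edges between $S$ and $L$ in two ways. Every vertex of $R$ has degree exactly $d$ by the definition of an $(m,n,d,\gamma,1-\varepsilon)$ expander, and all neighbors of a vertex of $S \subseteq R$ live in $L$, so the total number of edges incident to $S$ is exactly $d|S|$. On the other side, each edge is incident to a vertex in $\n(S)$, and we may split $\n(S)$ as the disjoint union $\von(S) \cupdot (\n(S) \setminus \von(S))$. Vertices in $\von(S)$ contribute at least $1$ edge each (they have an odd number, hence $\geq 1$), while vertices in $\n(S) \setminus \von(S)$ contribute at least $2$ edges each (they lie in $\n(S)$ so have $\geq 1$ neighbor in $S$, and the count is even, hence $\geq 2$). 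This yields
$$d|S| \;\geq\; |\von(S)| + 2\bigl(|\n(S)| - |\von(S)|\bigr) \;=\; 2|\n(S)| - |\von(S)|,$$
which rearranges to $|\von(S)| \geq 2|\n(S)| - d|S|$.

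Next I would feed in the expansion hypothesis. Since $|S| \leq \gamma n$, the defining property of a $(m,n,d,\gamma,1-\varepsilon)$ expander gives $|\n(S)| \geq d(1-\varepsilon)|S|$. Substituting,
$$|\von(S)| \;\geq\; 2d(1-\varepsilon)|S| - d|S| \;=\; d(1-2\varepsilon)|S|,$$
which is exactly the claimed lower bound. The assumption $\varepsilon < \tfrac{1}{2}$ ensures the bound is nontrivial (the right-hand side is positive whenever $S$ is nonempty).

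There is no real obstacle: the only subtlety is being careful that vertices in $\n(S) \setminus \von(S)$ genuinely contribute at least $2$ edges, which follows because they have an \emph{even} and \emph{positive} number of neighbors in $S$ (positive precisely because they belong to $\n(S)$). Everything else is a clean rearrangement of inequalities.
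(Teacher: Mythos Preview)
Your proof is correct and follows essentially the same approach as the paper: both establish the upper bound via $\von(S)\subseteq\n(S)$, then double-count the $d|S|$ edges between $S$ and $\n(S)$ (using that vertices in $\n(S)\setminus\von(S)$ have an even, hence $\geq 2$, number of neighbors in $S$) to get $|\von(S)|\geq 2|\n(S)|-d|S|$, and finally apply the expansion bound $|\n(S)|\geq d(1-\varepsilon)|S|$.
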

\begin{proof}
Suppose $S\subseteq R$ satisfies $|S|\leq \gamma n$. 
The second inequality follows from the fact $\von(S) \subseteq \n(S)\subseteq L$ by definition. To show the first inequality, note that there are $d|S|$ edges between vertices in $S$ and vertices in $\n(S)\subseteq L$ and each vertex in $\von(S)$ has at least one neighbor in $S$. Also each vertex in $\n(S)\setminus \von(S)$ has even number (at least $2$) of neighbors in $S$. Thus 
$$d|S|\geq |\von(S)|+2|\n(S)\setminus \von(S)|=2|\n(S)|-|\von(S)|$$
which implies
$$|\von(S)|\geq 2|\n(S)|-d|S|.$$
Since $|S|\leq \gamma n$ and $G$ is a $(m,n,d,\gamma,1-\varepsilon)$ expander graph, $|\n(S)|\geq d(1-\varepsilon)|S|$. Thus 
$$|\von(S)|\geq 2|\n(S)|-d|S|\geq 2d(1-\varepsilon)|S|-d|S|=d(1-2\varepsilon)|S|.$$



\end{proof}

\begin{lemma}\label{lemma2}
Suppose $G$ is a bipartite graph with vertex set $L \cupdot R$. Let $A$ and $B$ be nonempty disjoint subsets of $S\subseteq R$ such that $S=A\cup B$. If $\von(S)= \varnothing$, then $\von(A)= \von(B)$.
\end{lemma}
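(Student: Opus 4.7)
The plan is to unfold the definition of $\von$ vertex-by-vertex and exploit a basic parity identity. For any $v\in L$, because $A$ and $B$ are disjoint with $S=A\cup B$, we have
$$|\n(v)\cap S| = |\n(v)\cap A| + |\n(v)\cap B|.$$
Reading this identity modulo $2$ is the heart of the argument: $v\in \von(S)$ iff $|\n(v)\cap S|$ is odd, so if $\von(S)=\varnothing$ then for every $v\in L$ the sum $|\n(v)\cap A| + |\n(v)\cap B|$ is even, which forces $|\n(v)\cap A|$ and $|\n(v)\cap B|$ to have the same parity.

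From that observation I would conclude the set equality directly. Pick an arbitrary $v\in L$. If $v\in \von(A)$, then $|\n(v)\cap A|$ is odd, so by the parity equivalence above $|\n(v)\cap B|$ is also odd, hence $v\in \von(B)$. The reverse inclusion follows by the same argument with the roles of $A$ and $B$ swapped. Therefore $\von(A)=\von(B)$. Vertices $v\in L$ that lie outside $\n(S)$ contribute $0$ on both sides and thus belong to neither $\von(A)$ nor $\von(B)$, so they do not affect the equality.

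I do not anticipate a real obstacle; the whole content is the additive splitting of $|\n(v)\cap S|$ and a one-line parity check. The only thing to be careful about is that the decomposition $|\n(v)\cap S| = |\n(v)\cap A| + |\n(v)\cap B|$ genuinely requires $A\cap B=\varnothing$, which is part of the hypothesis, and that the hypothesis $\von(S)=\varnothing$ is used as a statement about \emph{every} $v\in L$, not merely those in $\n(S)$. Nonemptiness of $A$ and $B$ plays no role in the argument itself; it is only needed so that $\von(A)$ and $\von(B)$ are defined under the convention of the paper.
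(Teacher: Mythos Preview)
Your proof is correct and follows essentially the same approach as the paper: both arguments rest on the disjoint-union identity $|\n(v)\cap S| = |\n(v)\cap A| + |\n(v)\cap B|$ and the resulting parity constraint. The paper phrases each inclusion as a proof by contradiction while you argue the parity equivalence directly, but the content is the same.
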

\begin{proof}
Let $\von(S)= \varnothing$. To show $\von(A)\subseteq \von(B)$, suppose $x\in \von(A)$. We claim $x\in \von(B)$. Otherwise $x\notin \von(B)$, i.e., $x$ is adjacent to an even number of vertices in $B$. Since $x\in \von(A)$, $x$ is adjacent to an odd number of vertices in $A$. Thus $x$ is adjacent to an odd number of vertices in $S=A\cup B$. Therefore $\von(S)\neq \varnothing$, a contradiction. Thus $\von(A)\subseteq \von(B)$. Similarly we can show that $\von(B)\subseteq \von(A)$.
\end{proof}

    

Using the above lemmas, we prove Theorem \ref{lower bound}.

\begin{proof}[Proof of Theorem \ref{lower bound}]
By Theorem \ref{mindist2}, consider a nonempty set $S\subseteq R$ such that $d(C) = |S|$ and $\von(S)=\varnothing$. To prove by contradiction, suppose $2(1-\varepsilon) \gamma n> d(C)= |S|$. \\

Case 1. $|S|\leq \gamma n$\\
By Lemma \ref{lemma1}, $d(1-2\varepsilon)|S| \leq |\von(S)|$. Since  $\varepsilon<\frac{1}{2}$, we have 
$$0<d(1-2\varepsilon)|S| \leq |\von(S)|,$$
which implies $\von(S)\neq \varnothing$, a contradiction. \\

Case 2. $|S|>\gamma n$\\
In this case 
$$2(1-\varepsilon) \gamma n>|S|> \gamma n.$$




Choose a nonempty subset  $T$ of $S\subseteq R$ such that $|T|= \gamma n$. Then by Lemma \ref{lemma1},
\begin{equation}\label{eq1}
    d(1-2\varepsilon)\gamma n=d(1-2\varepsilon)|T| \leq |\von(T)| \leq |\n(T)|.
\end{equation}

Note that
$$|S\setminus T|=|S|-|T|<2(1-\varepsilon) \gamma n-\gamma n=(1-2\varepsilon)\gamma n.$$
Since each vertex in $S\setminus T$ has $d$ neighbors in $L$, by Lemma \ref{lemma1},
\begin{equation}\label{eq2}
    |\von(S\setminus T)|\leq |\n(S\setminus T)|\leq d|S\setminus T|<d(1-2\varepsilon)\gamma n.
\end{equation}

Combining (\ref{eq1}) and (\ref{eq2}),  we have
$$|\von(S\setminus T)|< d(1-2\varepsilon)\gamma n\leq |\von(T)|,$$
which implies $\von(S\setminus T)\neq \von(T)$. Since $S=S\cup (S\setminus T)$ and $\von(S)=\varnothing$, by Lemma \ref{lemma2}, we have $\von(S\setminus T)= \von(T)$, a contradiction.



\end{proof}

\begin{obs}
If we like to find the minimum distance $d(C)$ of the expander code $C$ of a $(m,n,d,\gamma,1-\varepsilon)$ expander graph $G$ with vertex set $L\cupdot R$ by brute force using Theorem \ref{mindist2}, then we need to consider all possible subset $S\subseteq R$ such that $\von(S)=\varnothing$. But because of Theorem \ref{lower bound}, we need to look at only  $S\subseteq R$ satisfying $|S|>2(1-\varepsilon) \gamma n$.
\end{obs}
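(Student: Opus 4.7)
The plan is to observe that this statement is a direct consequence of Theorem~\ref{mindist2} combined with Theorem~\ref{lower bound}, and to present the justification as a short two-step argument rather than a standalone proof. I would simply invoke the prior results without re-deriving them.

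First, I would recall that Theorem~\ref{mindist2} characterizes the minimum distance as
\[
d(C) \;=\; \min\{|S| : \varnothing \neq S \subseteq R,\ \von(S) = \varnothing\}.
\]
So a naive brute-force search must in principle range over every nonempty $S \subseteq R$, test whether $\von(S) = \varnothing$, and return the smallest cardinality witnessing this. Next, I would invoke Theorem~\ref{lower bound} to assert $d(C) \geq 2(1-\varepsilon)\gamma n$, which means any $S$ attaining the minimum in the formula above must satisfy $|S| \geq 2(1-\varepsilon)\gamma n$. Consequently, any subset $S \subseteq R$ with $|S| < 2(1-\varepsilon)\gamma n$ cannot attain the minimum and can be safely excluded from the search.

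Since the observation is phrased with the strict inequality $|S| > 2(1-\varepsilon)\gamma n$, I would add a brief remark that under the hypotheses of Theorem~\ref{lower bound} (in particular $0 < \varepsilon < \tfrac{1}{2}$ and $\gamma n$ a positive integer) the lower bound is actually strict in nontrivial cases, or alternatively note that subsets of size exactly $\lceil 2(1-\varepsilon)\gamma n\rceil$ could also be skipped or kept depending on whether $2(1-\varepsilon)\gamma n$ is an integer; this is a purely bookkeeping point and does not affect the substance.

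The main (and essentially only) obstacle is not mathematical but expository: making clear that the observation carries no additional content beyond the two theorems it cites, and that its value lies in the algorithmic remark that the search space shrinks from all of $2^R$ to subsets of size exceeding $2(1-\varepsilon)\gamma n$. No additional lemmas, calculations, or case analysis are needed.
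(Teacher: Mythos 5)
Your proposal is correct and takes essentially the same route as the paper, which states this Observation without proof as an immediate consequence of Theorem~\ref{mindist2} (the minimization formula) and Theorem~\ref{lower bound} (the lower bound on $d(C)$, hence on any minimizing $S$). Your side remark that the strict inequality $|S| > 2(1-\varepsilon)\gamma n$ should really be $|S| \geq 2(1-\varepsilon)\gamma n$ when that quantity is an integer is a fair and correct bookkeeping point, though it does not affect the substance.
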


\begin{example}
Consider the expander code $C(G)$ of the $(5,4,2,\frac{1}{2},\frac{2}{3})$ expander graph $G$ in Figure \ref{fig1}. Note that $1-\varepsilon=\frac{2}{3}$. By Theorem \ref{lower bound}, we need to look at only  $S\subseteq R$ satisfying $|S|>2(1-\varepsilon) \gamma n=\frac{8}{3}$. So we look at nonempty sets $S\subseteq R$ satisfying $|S|\geq 3$ and verify whether  $\von(S)=\varnothing$. For $S=\{6,8,9\}$, $\von(S)= \varnothing$. Thus by Theorem \ref{mindist2}, 
$$d(C(G)) = \min \{ |S|\;:\; \varnothing \neq S\subseteq R, \; \von(S)=\varnothing \}=|\{6,8,9\}|=3.$$

\end{example}

\vspace{12pt}
\noindent {\bf Acknowledgments}\\
The author would like to thank his colleague Dr. Bahattin Yildiz for his valuable suggestions.


\begin{thebibliography}{99}




\bibitem{alon} N. Alon, J. Bruck, J. Naor, M. Naor, and R. Roth, Construction of asymptotically good low-rate error-correcting codes through pseudo-random graphs, \emph{IEEE Transactions on Information Theory}, 38:509–516, 1992.


\bibitem{Capalbo} M. R. Capalbo, O. Reingold, S. Vadhan, and A. Wigderson, Randomness conductors and constant-degree lossless expanders, \emph{Proceedings of the ACM Symposium on Theory of Computing (STOC)}, pages 659–668, 2002.

\bibitem{Mallik} Sudipta Mallik and Bahattin Yildiz, {\it Isodual and Self-dual Codes from Graphs} (under review).

\bibitem{roth} Ron M. Roth, \emph{Introduction to Coding Theory},  Cambridge University Press 2006.

\bibitem{Spielman} M. Sipser and D. Spielman, Expander codes, \emph{IEEE Transactions on Information Theory}, 42(6):1710–1722, 1996.

\bibitem{Tanner} M. Tanner, {\it A recursive approach to Low-complexity codes}, IEEE Trans. Inform. Theory, vol. IT-27, pp. 533–547, Sept. 1981.

\bibitem{Zemor} G. Zemor, On expander codes, \emph{IEEE Transactions on Information Theory}, 47(2):835–837, 2001.

\end{thebibliography}
\end{document}